\documentclass[11pt]{article}
\usepackage[margin=1in]{geometry}
\usepackage{amsmath,amssymb,amsthm,mathtools}
\usepackage{enumitem}
\usepackage{xurl}
\usepackage[colorlinks=true,
            linkcolor=blue,
            citecolor=blue,
            urlcolor=blue]{hyperref}

\hypersetup{
    pdftitle={Hitting Arithmetic Progressions at the Square-Root Scale},
    pdfauthor={Samuel Korsky},
    pdfsubject={Arithmetic progressions; hitting sets},
    pdfkeywords={arithmetic progressions, hitting sets, progression-free sets}
}

\setlist{itemsep=0.25em,topsep=0.25em}

\newtheorem{theorem}{Theorem}[section]
\newtheorem{proposition}[theorem]{Proposition}
\newtheorem{lemma}[theorem]{Lemma}

\theoremstyle{remark}

\newcommand{\lcm}{\operatorname{lcm}}
\newcommand{\floor}[1]{\left\lfloor #1\right\rfloor}
\newcommand{\ceil}[1]{\left\lceil #1\right\rceil}
\numberwithin{equation}{section}

\title{Hitting Arithmetic Progressions at the Square-Root Scale}
\author{Samuel Korsky}
\date{\today}

\begin{document}

\maketitle

\begin{abstract}
\noindent
For positive integers $N$ and $k$, let $f(N,k)$ be the minimum size of a set $A\subseteq\{0,1,\ldots,N-1\}$ which intersects every $k$-term arithmetic progression contained in $\{0,1,\ldots,N-1\}$.
Brown and Freedman introduced this hitting problem for arithmetic progressions and studied it for growing $k$.
The square-root scale $k=\sqrt N$ is a natural transition point.
Truss proved
\[
        f(n^2,n)>n+\frac12 n^{1/2}-2.
\]
We improve the leading constant in the second-order term, proving
\[
        f(n^2,n)\ge n+\left(\frac1{\sqrt2}+o(1)\right)n^{1/2}.
\]
More generally, for fixed $\theta\ge0$ and integers $k\le n$ satisfying $n-k=\theta n^{1/2}+O(1)$, we prove
\[
        f(n^2,k)
        \ge
        \floor{\frac{n^2}{k}}
        +
        \left(\frac{\sqrt{\theta^2+2}-\theta}{2}+o(1)\right)n^{1/2}.
\]
On the upper-bound side, Brown and Freedman proved $f(p^2,p)\le 2p-2$ for odd primes $p$, and subsequent Szekeres-type constructions give logarithmic savings.
We prove the stronger asymptotic upper bound
\[
        f(p^2,p)
        \le
        2p-\left(\sqrt{\frac23}-o(1)\right)\sqrt{\frac p{\log p}}
\]
for sufficiently large prime $p$.
The upper bound is obtained by a randomized front construction with an alteration step.
\end{abstract}

\section{Introduction}

For positive integers $N$ and $k$, let
\[
        [N]_0=\{0,1,\ldots,N-1\}.
\]
We write $f(N,k)$ for the minimum cardinality of a set $A\subseteq[N]_0$ which intersects every $k$-term arithmetic progression contained in $[N]_0$.
Throughout, arithmetic progressions are understood to have positive common difference.
The same function was introduced by Brown and Freedman \cite{BrownFreedman}, using the translated interval $\{1,\ldots,N\}$.
Equivalently, $[N]_0\setminus A$ contains no $k$-term arithmetic progression.

For fixed $k$, this problem is the complement of the classical extremal problem for progression-free sets.
If $r_k(N)$ denotes the largest size of a subset of $[N]_0$ with no $k$-term arithmetic progression, then
\[
        f(N,k)=N-r_k(N).
\]
Thus Szemer\'edi's theorem \cite{Szemeredi} says that $f(N,k)=N-o_k(N)$ for every fixed $k$.
Quantitative estimates for $r_k(N)$ form a central part of additive combinatorics, including Gowers's bounds for Szemer\'edi's theorem \cite{Gowers}, the recent Kelley--Meka and Bloom--Sisask progress on three-term progressions \cite{KelleyMeka,BloomSisask}, and the Leng--Sah--Sawhney bounds for longer progressions \cite{LengSahSawhney}.
Behrend--Rankin-type lower-bound constructions for $r_k(N)$ \cite{Behrend,Rankin}, including O'Bryant's formulation for long progressions \cite{OBryant}, also belong to this fixed-$k$ or slowly-growing-$k$ context.

The present paper is concerned with a different regime, where $k$ grows as a power of $N$.
Brown and Freedman proved, among other results, that if $k=N^\varepsilon$, then
\[
        f(N,k)\ll_\varepsilon N^{1-\varepsilon},
\]
matching the trivial lower bound $f(N,k)\ge \floor{N/k}$ up to a constant depending on $\varepsilon$.
They also studied the special prime-square case $N=p^2$, $k=p$, and proved
\[
        f(p^2,p)\le 2p-2
\]
for odd primes $p$.
Brown and Freedman also recorded Truss's improvement of the lower bound at the square-root scale.
Truss's published result states that
\[
        n+\frac12 n^{1/2}-2
        <
        f(n^2,n)
        \le
        p+\ceil{\frac{(n-1)^2}{p}},
\]
where $p$ is the largest prime at most $n$ \cite{Truss}.
The upper bound is asymptotically $2n+o(n)$.
Xu later obtained further upper-bound improvements for $r_p(p^2)$, equivalently upper-bound savings for $f(p^2,p)$, using a generalization of Szekeres's algorithm \cite{Xu}.

Our first result improves Truss's lower-bound constant.

\begin{theorem}[Critical lower bound]\label{thm:critical}
As $n\to\infty$,
\[
        f(n^2,n)
        \ge
        n+\left(\frac1{\sqrt2}+o(1)\right)n^{1/2}.
\]
\end{theorem}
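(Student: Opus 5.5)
The plan is a double-counting argument over the arithmetic progressions whose common difference is close to $n$, combined with the gap constraint from difference-$1$ progressions. The $o(1)$ term absorbs all error terms.

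\emph{Setup.} Let $A\subseteq[n^2]_0$ hit every $n$-term progression, and write $|A|=n+m$. For $0\le t\le T$, with $T$ a parameter of order $n^{1/2}$ to be optimized, consider the family $\mathcal P_t$ of $n$-term progressions with difference $d=n-t$ contained in $[n^2]_0$.
\begin{itemize}
\item Their starting points are the integers $a$ with $0\le a\le n^2-1-(n-1)(n-t)$, so
\[
|\mathcal P_t|=n^2-(n-1)(n-t)=n+t(n-1).
\]
\item A point $x\in[n^2]_0$ lies on exactly as many members of $\mathcal P_t$ as there are integers $a\equiv x\pmod{d}$ with $\max(0,x-(n-1)d)\le a\le\min\bigl(x,\,n+t(n-1)-1\bigr)$.
\item For $x$ in the bulk this count is at most $t+1$ (up to rounding). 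Near the two ends it drops, to roughly $\lfloor x/d\rfloor+1$ at the left and symmetrically at the right.
\end{itemize}
Since each progression in $\mathcal P_t$ contains a point of $A$, we get $\sum_{x\in A}c_t(x)\ge n+t(n-1)$, where $c_t(x)$ is the number of members of $\mathcal P_t$ through $x$.

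\emph{The difficulty and the intended fix.} If the points of $A$ sat near multiples of $n$, summing the end deficits would give a loss of about $t^2$, and $m\gtrsim t$ would follow. That cannot be right, because $A$ is free to place extra points near the ends, where the deficit occurs. So the end losses must be balanced against the cost of crowding points there. The plan is to use the difference-$1$ constraint: consecutive elements of $A$ differ by at most $n$, and $\min A\le n-1$ and $\max A\ge n^2-n$. Hence the number of elements of $A$ in $[0,jn)$ is at least about $j$, and symmetrically at the top end. The crowding structure near the ends is then captured by the surplus
\[
s(j)=|A\cap[0,jn)|-j .
\]
In terms of this surplus, the end deficit for difference $n-t$ is essentially $\sum_{j<t}(t-j)$ minus a correction that grows with the surplus $s(j)$ for $j<t$.

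\emph{Combining and optimizing.} I would sum the inequalities over $t\le T$ with weights $w_t$, and likewise use the symmetric versions at the right end. The aim is a single inequality of the form
\[
m\cdot W_1\;\ge\;\sum_t w_t\,\mathrm{def}_t-\text{(credit bought by surplus points)},
\]
in which every surplus point counts toward $m$ but buys at most a bounded amount of deficit across all $t$. The hope is that, after optimizing the weights (essentially uniform up to $T$) and choosing $T\approx\sqrt{2n}$, the result is $m^2\gtrsim n/2$, that is, $m\ge(1/\sqrt2+o(1))n^{1/2}$. This matches the constant in Theorem~\ref{thm:critical}. A sanity check is that the more general bound in the abstract, with $\theta=0$, gives the same constant.

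\emph{Main obstacle.} The hard step is the middle one: bounding exactly how much deficit, summed over all differences $n-t$, a single surplus point near an end can compensate. This requires controlling the rounding in the residue conditions $a\equiv x\pmod{n-t}$ uniformly in $t$. A point at $x\approx jn$ has residues modulo $n-t$ that shift by $j$ as $t$ varies, so it covers starts near the boundary only for a limited range of $t$. I would first try to prove that a point in $[jn,(j+1)n)$ adds at most $O(1)$ to each deficit, and only for $t$ with $j<t$. That per-point bound would make the quadratic optimization above go through.
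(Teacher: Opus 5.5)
There is a genuine gap, and it is structural rather than a matter of controlling rounding. Every ingredient you plan to combine is a linear inequality in the indicator of $A$, obtained from the hitting constraints. The ingredients are the counts $\sum_{x\in A}c_t(x)\ge|\mathcal P_t|$, the difference-$1$ constraints, the bounds $|A\cap[0,jn)|\ge j$, and a per-point bound on how much end deficit a point can offset. So whatever you derive from them also holds for fractional hitting sets. But the uniform weighting $w(x)=1/n$ on $[n^2]_0$ gives weight exactly $1$ to every $n$-term progression and has total weight $n$. The fractional hitting number is therefore exactly $n$. Hence no choice of weights $w_t$, credit scheme, or cutoff $T$ can produce $|A|\ge n+c\sqrt n$ with $c>0$.

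You can see this concretely in the step ``for $x$ in the bulk this count is at most $t+1$ (up to rounding)''. Since $|\mathcal P_t|=(t+1)(n-t)+t^2$, bulk points in $t^2$ residue classes modulo $n-t$ lie on $t+2$ members of $\mathcal P_t$. Every class does so once $t^2\ge n-t$, which happens for $t$ between roughly $\sqrt n$ and your $T\approx\sqrt{2n}$. Because $\sum_{x\in[n^2]_0}c_t(x)=n|\mathcal P_t|$ exactly, this bulk excess is of order $t^2$ and, for the uniform weighting, cancels the end deficit $\approx t^2$ up to $O(t)$. The rounding is the main term, and nothing linear stops the bulk points of $A$ from sitting in the favourable classes. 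The step you flagged as the main obstacle therefore cannot be completed in any form that yields the theorem.

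What is missing is an argument that genuinely uses integrality. The paper gets it from blocks containing \emph{exactly one} point.
\begin{enumerate}
\item With $|A|=n+s$, at most $s$ of the $n$ blocks of length $n$ are nonsingleton. So there is a run of about $n/(s+1)$ singleton blocks (Lemma~\ref{lem:run}).
\item Inside any $q$ consecutive singleton blocks, the difference-$q$ progressions force the $q$ selected points to form a complete residue system modulo $q$. Hence the descents $\delta_i=n-(x_{i+1}-x_i)$ have every length-$q$ window sum divisible by $q$.
\item Lemma~\ref{lem:sequence} then gives $\sum\delta_i\ge m(m-1)/2$ for a nonconstant descent sequence, while $\sum\delta_i\le n-1$. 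So $m\lesssim\sqrt{2n}$ and $s\gtrsim\sqrt{n/2}$.
\end{enumerate}
Your differences $n-t$ do enter, but only in the constant-descent case and in a rigid form. There the whole run lies in a single residue class modulo $d=n-\delta$. The $n+s-m-1$ points outside the run must therefore cover the other $d-1$ classes, giving $s\ge m-\delta\ge m-(n-1)/m$. Both steps (complete residue systems from singleton blocks, and one point needed per residue class) rely on $A$ being an honest set of points. That is precisely what a weighted count cannot see.
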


The proof is elementary.
Partition $[n^2]_0$ into $n$ consecutive blocks of length $n$.
Every block is itself an $n$-term arithmetic progression, so every hitting set of size $n+s$ corresponds to at most $s$ nonsingleton blocks.
Thus there is a long run of consecutive blocks containing exactly one selected point each.
The relative positions of the selected points in such a run form a nonincreasing sequence, and its descent sequence satisfies strong divisibility constraints.
A finite congruence lemma shows that every nonconstant descent sequence has quadratic total mass.
If the descent sequence is constant, a separate residue-class argument gives the same asymptotic constant.

The same argument extends below the critical scale.

\begin{theorem}[Below-critical lower bound]\label{thm:below}
Fix $\theta\ge0$.
Let $k=k(n)\le n$ be a sequence of positive integers satisfying
\[
        n-k=\theta n^{1/2}+O(1).
\]
Then, as $n\to\infty$,
\[
        f(n^2,k)
        \ge
        \floor{\frac{n^2}{k}}
        +
        \left(\frac{\sqrt{\theta^2+2}-\theta}{2}+o(1)\right)n^{1/2}.
\]
\end{theorem}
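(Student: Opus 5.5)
We follow the strategy sketched for Theorem~\ref{thm:critical}, now with blocks of length $k$ instead of $n$. Write $N=n^2$ and $m=\floor{N/k}$, and partition $[N]_0$ into $m$ consecutive blocks $B_j=\{jk,\ldots,jk+k-1\}$ for $0\le j<m$, together with a leftover tail of length $r=N-mk<k$. Each block is a $k$-term progression, so a hitting set $A$ of size $m+s$ has at most $s$ blocks with two or more points. Removing these blocks leaves at most $s+1$ maximal runs of consecutive singleton blocks. Hence there is a run of at least $(m-s)/(s+1)$ consecutive blocks, each containing exactly one point of $A$. We take $s=c\,n^{1/2}$ with $c$ below the claimed constant and aim for a contradiction. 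Note that $m=n+\theta n^{1/2}+O(1)$, so the run has length $L\gtrsim n^{1/2}/c$.

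Inside such a run, let $x_j\in[0,k)$ be the offset of the unique point in block $j$. First, $x_j$ is nonincreasing. Otherwise the progression with difference $k+1$ started at $jk+x_j+1$, or a nearby progression of difference $k$ or $k\pm1$ straddling two blocks, would miss $A$. This is the same argument as in the critical case. Let $d_j=x_j-x_{j+1}\ge0$ be the descents. Progressions with difference $k-d$ passing through several consecutive blocks give the same divisibility and congruence constraints as before. We would invoke the paper's finite congruence lemma: a nonconstant descent sequence over a run of length $L$ has total mass $\sum d_j\gtrsim L^2/2$ up to lower-order terms. Since $\sum d_j\le x_{\text{first}}\le k-1$, this forces $L\lesssim\sqrt{2k}\approx\sqrt2\,n^{1/2}$.

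The constant-descent case $d_j\equiv d$ is handled by the residue-class argument. If $d=0$, all points in the run share an offset, and a progression of difference $k$ shifted by one avoids the run. The run then has to be short, or else the blocks outside it must supply extra points; counting these costs roughly the length $L$. If $d>0$, then $dL\le k$, and a difference-$(k-d\pm1)$ progression gives a similar bound.

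The new ingredient is the slack $n-k\approx\theta n^{1/2}$. In the critical case $k=n$, the bound $s\ge n^{1/2}/\sqrt2$ comes from balancing $(m-s)/(s+1)\le\sqrt2\,n^{1/2}$. Here the runs can lie only in the main part, but the cost of $A$ is measured against $m$ rather than $n$. We redo the optimization keeping the term $\theta n^{1/2}$: the extra blocks both lengthen the available space and are already paid for in $\floor{n^2/k}$. Concretely, we expect the constraint to become quadratic in $c$. The total length $m\approx n+\theta n^{1/2}$ is covered by $s+1$ runs of length at most $\sqrt2 n^{1/2}$ plus the defective blocks. The defective blocks contribute excess at least $s$, while a global progression of difference $k$ or $k+1$ that crosses the tail couples the run lengths to $\theta$. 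This yields $c^2+\theta c\ge1/2$, that is, $c\ge(\sqrt{\theta^2+2}-\theta)/2$, the stated constant. We also check that it reduces to $1/\sqrt2$ at $\theta=0$.

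The main obstacle is getting exactly this quadratic relation. The naive pigeonhole on the longest run only yields $c\ge1/(\sqrt2)$-type bounds that ignore $\theta$, or loses constants. So we will need a sharper averaging over all runs. This means summing the per-run mass inequality $\sum d_j\gtrsim L_i^2/2$ over the runs and using that the offsets telescope across a defective block, where the extra point permits a reset of at most about $k$. We would try to set up the inequality $\sum_i L_i^2/2\lesssim (s+\theta n^{1/2})\cdot n^{1/2}\cdot(\text{const})$ with $\sum L_i\approx m-s$ and apply Cauchy--Schwarz. Identifying the correct total budget for the offset drops is the step we would work out first.
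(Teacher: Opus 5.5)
\textbf{There is a genuine gap.} The $\theta$-dependent quadratic is never derived, and your plan for it (averaging the mass inequality over all runs, Cauchy--Schwarz, a difference-$k$ or $k+1$ progression crossing the tail) is aimed at the wrong place.

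Note that $c_\theta=(\sqrt{\theta^2+2}-\theta)/2\le 1/\sqrt2$, so $\theta$ \emph{weakens} the bound, and a single longest run suffices. In the nonconstant case the descent lemma gives $L(L-1)/2\lesssim k$, hence $s\ge(1/\sqrt2+o(1))n^{1/2}\ge c_\theta n^{1/2}$ directly. So the ``naive'' $c\ge 1/\sqrt2$ bound you call insufficient is more than enough there. All the $\theta$-dependence comes from the constant-descent case. There your sketch (one shifted difference-$k$ progression, ``a difference-$(k-d\pm1)$ progression gives a similar bound'', ``costs roughly the length $L$'') never becomes a quantitative inequality. Exhibiting individual missed progressions does not tell you how many extra points are forced.

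\textbf{The missing argument is a residue-class count over all classes at once.}
\begin{enumerate}
\item If the descent is constantly $\delta$, the $L$ run points have common gap $g=k-\delta$, so they lie in a single class mod $g$.
\item For each residue $a$ mod $g$, the progression $a,a+g,\dots,a+(k-1)g$ lies in $[n^2]_0$, since $kg\le k^2\le n^2$. Hence $A$ meets all $g$ classes.
\item The $M+s-L$ points of $A$ outside the run (with $M=\floor{n^2/k}$) must therefore cover the other $g-1$ classes. This gives $M+s-L\ge k-\delta-1$, i.e.\ $s\ge (L-1)-(M-k)-\delta$.
\item Here $\theta$ enters precisely through $M-k=2\theta n^{1/2}+O(1)$, the surplus of blocks over residue classes. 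The other loss is controlled by $\delta(L-1)\le k-1$.
\item Set $s=cn^{1/2}$. If $c<c_\theta\le1/\sqrt2$, then $L-1\ge(1/c+o(1))n^{1/2}>\sqrt{k-1}$. Since $t\mapsto t-(k-1)/t$ is increasing for $t>\sqrt{k-1}$, you get $c\ge 1/c-2\theta-c$, i.e.\ $2c^2+2\theta c-1\ge0$. This forces $c\ge c_\theta$.
\end{enumerate}
No use of the tail $[Mk,n^2-1]$ or of other runs is needed.

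\textbf{Smaller corrections.}
\begin{itemize}
\item The witness for monotonicity of the offsets is the difference-$1$ interval $[jk+x_j+1,\,jk+x_j+k]$, not a difference-$(k+1)$ progression.
\item The divisibility constraints come from the residue classes mod $q$ inside $q$ consecutive singleton blocks, i.e.\ difference-$q$ progressions, not differences $k-d$ or $k\pm1$.
\end{itemize}
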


Our upper-bound result improves the prime-square construction.

\begin{theorem}[Random front upper bound]\label{thm:upper}
For every sufficiently large prime $p$ we have
\[
        f(p^2,p)
        \le
        2p-\left(\sqrt{\frac23}-o(1)\right)\sqrt{\frac p{\log p}}.
\]
\end{theorem}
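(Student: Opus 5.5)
The plan is to start from a deterministic ``front'' hitting set of size about $2p$ of Brown--Freedman type, randomize it, and then delete points (alteration). Write each $x\in[p^2]_0$ as $x=ip+j$ with row $i$ and column $j$, $0\le i,j<p$. A $p$-term progression with difference $d$ falls into one of two cases.
\begin{itemize}
\item If $p\mid d$, the span forces $d=p$, and the progression is exactly a column.
\item If $p\nmid d$, the $p$ terms meet every column exactly once, and their rows form a nondecreasing sequence.
\end{itemize}
So a hitting set must meet every column. For the second kind of progression it suffices that the set behaves like a barrier which every ``monotone transversal'' of the columns must cross. The first step is a crossing lemma. Let $A$ be the cell set of a monotone staircase in the $p\times p$ grid, with one or two cells per column, running from the top-left to the bottom-right. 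Then every progression with $p\nmid d$ meets $A$. Such a staircase has $2p-1$ cells, which recovers the $2p-O(1)$ baseline. I would prove the lemma by tracking, column by column, whether the progression lies above or below the staircase. It must switch sides somewhere, and at the first switch it lands on a staircase cell, because consecutive terms advance the row by $\lfloor d/p\rfloor$ or $\lceil d/p\rceil$.

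The second step is to randomize the staircase. I would let its step pattern be a random lattice path, so that the $2p-1$ cells are generated by a random walk with i.i.d.\ $\pm$ increments. A cell $c$ is then \emph{redundant} if every progression through $c$ also meets $A\setminus\{c\}$. Deleting a set of pairwise ``independent'' redundant cells keeps $A$ hitting. The key computation is a lower bound on the probability that a given corner cell is redundant. Progressions with difference near $\pm$ the local slope pass through $c$ and then travel parallel to the staircase. For such a progression to avoid the rest of $A$, the random walk would have to stay on one side of a line over a long stretch. A ballot-type estimate says this happens only with probability decaying in the stretch length.

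Choosing the stretch length $L\asymp\sqrt{p\log p}$ and a union bound over the $O(p)$ relevant differences shows the following. A corner cell is redundant with probability about $1-p^{-\Omega(1)}$ once the walk is conditioned on a local pattern of density about $\sqrt{1/(p\log p)}$ per cell. I would then select deletable cells greedily, keeping them at mutual distance at least $L$ so that deletions do not interact. This yields about $p/L$ deletions. Optimizing the constants in the Gaussian tail $\exp(-3t^2/2)$ coming from the variance of the walk's area/deviation gives the saving $(\sqrt{2/3}-o(1))\sqrt{p/\log p}$. Finally, the alteration step removes the few deletions that fail, at a cost of $o(\sqrt{p/\log p})$ points.

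The main obstacle is the redundancy estimate in the second step. One must show that, after removing a cell, every progression through it is still caught elsewhere on the random front, uniformly over all $\Theta(p)$ differences and all starting points. I would first handle small differences, $d<p$, where the progression is nearly horizontal and the crossing argument is local. I would then reduce large $d$ to this case by writing $d=qp+r$ and viewing the progression in the sheared coordinates $(i-qj,\,j)$. In those coordinates it again becomes a near-horizontal walk against a sheared random staircase.
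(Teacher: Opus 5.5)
Your approach has a genuine gap at its foundation: the crossing lemma is false, so there is no valid $2p-1$ hitting set to randomize. Take $p=5$ and the diagonal staircase $S=\{0,1,6,7,12,13,18,19,24\}$, i.e.\ the cells $(i,i)$ and $(i,i+1)$. It runs monotonically from the top-left to the bottom-right corner, has one or two cells per column, and has $2p-1$ cells. The progression $5,8,11,14,17$ has $d=3$, so $p\nmid d$. It occupies the cells $(1,0),(1,3),(2,1),(2,4),(3,2)$ and misses $S$. (The mirror-image staircase $\{4,8,9,12,13,16,17,20,21\}$ likewise misses $2,6,10,14,18$.)

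The error is that you track the side of the staircase column by column, but you only control the row change between \emph{consecutive terms}. Those terms lie in columns $a+td$ and $a+(t+1)d \bmod p$, which are not adjacent. For $1\le d<p$, let $1\le e<p$ be the inverse of $d$ modulo $p$ and write $ed=1+kp$. Then the rows of the progression in columns $c$ and $c+1$ differ by $k$ or by $k-d$, and $k-d\le -2$ is possible. Here $e=2$ and $k=1$, so the row drops by $2$ from column $2$ to column $3$. Thus the progression passes from below the staircase to above it between two adjacent columns without touching it.

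Even with a correct base set, your second step is a heuristic rather than a proof. For most $d$ the progression visits columns in the order $a,a+d,a+2d,\ldots \pmod p$. So it does not ``travel parallel to the staircase,'' and no ballot problem is actually set up. Keeping deleted cells at mutual distance at least $L$ does not make deletions independent. A $p$-term progression with difference $d$ spans about $d$ rows, and nothing prevents it from meeting $A$ only in two far-apart deleted cells. The constant $\sqrt{2/3}$ is attached to an unspecified Gaussian tail $\exp(-3t^2/2)$ rather than derived. Also, $(p-1)d\le p^2-1$ forces $d\le p+1$, so your sheared reduction for $d=qp+r$ concerns only the single progression with $d=p+1$.

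The paper's mechanism is different and simpler. It adds nothing to a staircase; it saves $h$ points by truncating column $0$ to rows $\ge h$. It takes row $h-1$ in columns $r>H$. In each column $1\le r\le H$ it places an independent uniformly random row in $\{0,\ldots,h-1\}$, with $\rho(1)=1$ to catch the $d=p+1$ diagonal. This gives $2p-h-1$ points. A deterministic argument using $p>(h+1)H$ shows that every progression not hit by the deterministic part lies in rows $<h$ and has $d\le h$. So there are at most $ph^2$ such progressions, each missed with probability at most $e^{-(H-1)/h}$. One then fixes $\rho$ achieving at most the expected number of misses and adds a point for each. With $h\approx\gamma\sqrt{p/\log p}$ and $H\approx Ah\log p$, the condition $(h+1)H<p$ forces $A\gamma^2<1$. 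The repair cost $ph^2p^{-A}=O(p^{2-A}/\log p)$ is $o(\sqrt{p/\log p})$ once $A>3/2$, and this trade-off is exactly where $\sqrt{2/3}$ comes from.
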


Here and throughout, $\log$ denotes the natural logarithm.
The construction is a randomized front construction followed by an alteration step.
One keeps the column-zero points only from row $h$ onward, places all large nonzero residues in row $h-1$, and randomly assigns the first $H\asymp h\log p$ residues among the first $h$ rows.
Large common differences are handled deterministically by the final front row.
For the remaining progressions, a deterministic reduction shows that they must be contained in the first $h$ rows and have common difference at most $h$; the expected number missed by the random part is then small enough to repair directly.

We also include the following orientation result, which explains why $k= N^{1/2}$ is a transition point.

\begin{proposition}[Above the square-root scale]\label{prop:above-root}
Fix $1/2<c<1$ and let $k=\floor{N^c}$.
Then
\[
        f(N,k)=(1+o(1))\cdot\frac Nk.
\]
\end{proposition}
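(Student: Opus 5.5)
The lower bound is immediate. Partition $[N]_0$ into $\floor{N/k}$ disjoint blocks of $k$ consecutive integers. Each block is a $k$-term arithmetic progression with difference $1$, so any hitting set meets every block. This gives $f(N,k)\ge\floor{N/k}=(1+o(1))N/k$, since $N/k\to\infty$ when $c<1$. All of the work therefore goes into an upper bound of the form $f(N,k)\le(1+o(1))N/k$. I will get it from an explicit construction that uses a short periodic pattern at one fixed scale, and then handle large common differences by a counting argument that works only because $k^2\gg N$.

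Here is the construction. Fix a modulus $m$ with $m\le k$, to be chosen later, and let $A_0=\{x\in[N]_0: x\equiv 0 \pmod m\}$. Then $|A_0|\le N/m+1$. A $k$-term progression $a,a+d,\dots,a+(k-1)d$ with $\gcd(d,m)=1$ runs through all residues modulo $m$ within any $m\le k$ consecutive terms, so it meets $A_0$. A progression with $\gcd(d,m)>1$ may miss $A_0$. These are the progressions I must still hit. I control them with the length constraint $(k-1)d\le N-1$, which gives $d<N/(k-1)\approx N^{1-c}$. Because $c>1/2$, we have $N^{1-c}=o(k)$. So every common difference of a $k$-term progression in $[N]_0$ is much smaller than $k$.

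I will choose $m=p$ to be a prime with $N^{1-c}<p\le k$. I want $p$ close to $k$, namely $p=(1+o(1))k$, which the prime number theorem allows because $k\to\infty$ and $N^{1-c}=o(k)$. Every admissible $d$ then satisfies $d<p$, so $\gcd(d,p)=1$ automatically. Every $k$-term progression therefore meets $A_0$, and
\[
f(N,k)\le |A_0|\le \frac{N}{p}+1=(1+o(1))\frac Nk .
\]
This is exactly where $c>1/2$ enters: it is needed for $N/k<k$, so that a single prime of size about $k$ exceeds every possible difference. At $c=1/2$ this fails, which matches the proposition's role as the transition.

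The main point to check is the residue-coverage step. If $\gcd(d,p)=1$, then the terms $a+jd$ for $0\le j<p$ are pairwise distinct modulo $p$, so one of them is $\equiv0\pmod p$. Since $p\le k$, that term is among the first $k$ terms, and it lies in $[N]_0$. The other thing to verify is the existence of a prime in $((1-\epsilon)k,k]$ for large $k$, which is standard (Bertrand-type results suffice with the prime number theorem). No further obstacle is expected.
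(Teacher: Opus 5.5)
Your proposal is correct and follows essentially the same route as the paper: the block lower bound, then the multiples of a prime $p=(1+o(1))k$, which exceeds every admissible difference $d\le (N-1)/(k-1)=o(k)$ because $c>1/2$, so that the first $p\le k$ terms of any progression cover all residues modulo $p$. The one slip is that the condition $N^{1-c}<p$ should read $p>(N-1)/(k-1)$, which can be slightly larger than $N^{1-c}$; this is automatic once you take $p=(1+o(1))k$, as you do.
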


The trivial block lower bound gives the lower estimate.
For the upper estimate, choose the largest prime $q\le k$.
By the prime number theorem,
\[
        q=(1+o(1))k.
\]
Since $c>1/2$, we have $(N-1)/(k-1)=o(k)$, so $q>(N-1)/(k-1)$ for all sufficiently large $N$.
Then take the multiples of $q$ in $[N]_0$.
This set has size $(1+o(1))N/k$.
It hits every $k$-term progression: if a progression has common difference $d\ge q$, then its span is at least $(k-1)q>N-1$, impossible; hence $1\le d<q$, so $d$ is invertible modulo $q$, and the first $q\le k$ terms already cover every residue class modulo $q$.
Thus the scale $k=N^{1/2}$ is the point at which the simple prime-modulus construction no longer automatically matches the block lower bound.

\section{A Congruence Lemma}

We first prove the arithmetic lemma used in the lower bound.

\begin{lemma}\label{lem:lcm}
For integers $a\ge1$ and $t\ge0$,
\[
        a\binom{a+t}{t}
        \mid
        \lcm(a,a+1,\ldots,a+t).
\]
\end{lemma}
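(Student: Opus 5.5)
This is the classical identity $a\binom{a+t}{t}\mid\lcm(a,\ldots,a+t)$. The cleanest route is through partial fractions of $1/\bigl(x(x+1)\cdots(x+t)\bigr)$, since the partial fraction coefficients are binomial coefficients.

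Starting from the identity
\[
        \frac{t!}{x(x+1)\cdots(x+t)}=\sum_{j=0}^{t}\frac{(-1)^j\binom{t}{j}}{x+j},
\]
I would substitute $x=a$ and multiply by $L=\lcm(a,a+1,\ldots,a+t)$. Each term $L\binom tj/(a+j)$ on the right is an integer, so
\[
        \frac{L\,t!}{a(a+1)\cdots(a+t)}\in\mathbb Z .
\]
Since $a(a+1)\cdots(a+t)=a\cdot t!\binom{a+t}{t}$, the left-hand side equals $L/\bigl(a\binom{a+t}{t}\bigr)$. This number is an integer, which is exactly the claimed divisibility.

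The only step needing justification is the partial fraction identity itself. I would verify it by the cover-up rule. The coefficient of $1/(x+j)$ is
\[
        \frac{t!}{\prod_{i\ne j}(i-j)}=\frac{t!}{(-1)^j j!\,(t-j)!}=(-1)^j\binom tj .
\]
The decomposition is valid because the poles are simple and the numerator has degree $0<t+1$.

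Alternatively, one could prove it by induction on $t$ using $\binom{a+t}{t}=\binom{a+t-1}{t-1}\frac{a+t}{t}$, but the partial fraction route avoids any gcd bookkeeping, so I would use it.

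Nothing here is a real obstacle. The only things to check are the sign and normalization in the cover-up computation and the case $t=0$, which is trivial: it reads $a\mid a$.
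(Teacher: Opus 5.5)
Your proof is correct, and it takes a different route from the paper's.

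The paper works one prime at a time. With $c_j$ the number of multiples of $p^j$ among $a,\ldots,a+t$ and $e=v_p(L)$, it uses $c_j=0$ for $j>e$ and $c_j\le\lfloor t/p^j\rfloor+1$ for $j\le e$. Legendre's formula then gives $v_p\bigl(a(a+1)\cdots(a+t)\bigr)-v_p(t!)\le e$.

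Your argument replaces this local counting with one global identity. After multiplying by $L$, it exhibits
\[
\frac{L}{a\binom{a+t}{t}}=\sum_{j=0}^{t}(-1)^j\binom tj\frac{L}{a+j}
\]
as an explicit integer combination of integers, so no valuations or interval-counting are needed. Your cover-up computation $\prod_{i\ne j}(i-j)=(-1)^j j!\,(t-j)!$ and the factorization $a(a+1)\cdots(a+t)=a\cdot t!\binom{a+t}{t}$ are both right. This is the same identity as $\int_0^1x^{a-1}(1-x)^t\,dx=1/\bigl(a\binom{a+t}{t}\bigr)$, familiar from Nair's proof of Chebyshev-type bounds. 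It also gives an explicit formula for the quotient.

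The paper's version costs a little more bookkeeping but is fully self-contained. It shows the mechanism prime by prime: each power $p^j$ with $j\le v_p(L)$ contributes at most one multiple beyond those already counted by $v_p(t!)$, and higher powers contribute nothing.
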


\begin{proof}
Let
\[
        L=\lcm(a,a+1,\ldots,a+t).
\]
It is enough to compare $p$-adic valuations for each prime $p$.
For $j\ge1$, let $c_j$ be the number of integers in $\{a,a+1,\ldots,a+t\}$ divisible by $p^j$.
If $e=v_p(L)$, then $c_j=0$ for $j>e$.
For $1\le j\le e$,
\[
        c_j\le \floor{\frac{t}{p^j}}+1,
\]
because an interval of length $t+1$ contains at most $\floor{t/p^j}+1$ multiples of $p^j$.
Therefore
\[
\begin{aligned}
        v_p\!\left(\prod_{i=0}^t(a+i)\right)-v_p(t!)
        &=
        \sum_{j\ge1}c_j-
        \sum_{j\ge1}\floor{\frac{t}{p^j}} \\
        &\le
        \sum_{j=1}^e 1
        =e
        =v_p(L).
\end{aligned}
\]
Since
\[
        \frac{a(a+1)\cdots(a+t)}{t!}
        =
        a\binom{a+t}{t},
\]
the result follows.
\end{proof}

\begin{lemma}[Descent congruence lemma]\label{lem:sequence}
Let $m\ge1$, and let
\[
        \delta_0,\delta_1,\ldots,\delta_{m-1}
\]
be nonnegative integers.
Suppose that for every $1\le q\le m$ and every $0\le i\le m-q$,
\[
        q\mid \delta_i+\delta_{i+1}+\cdots+\delta_{i+q-1}.
\]
If $(\delta_i)$ is nonconstant, then
\[
        \sum_{i=0}^{m-1}\delta_i\ge \frac{m(m-1)}2.
\]
\end{lemma}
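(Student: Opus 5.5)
The plan is to reduce the window-sum hypothesis to a pairwise congruence condition, and then use Lemma~\ref{lem:lcm} to show that any change of value forces a large jump.

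\emph{Step 1: pairwise congruences.} For $1\le q\le m-1$ and $0\le i\le m-q-1$, both windows of length $q$ starting at $i$ and at $i+1$ have sums divisible by $q$. Subtracting the two sums gives
\[
        \delta_{i+q}\equiv\delta_i\pmod q .
\]
Hence $\delta_x\equiv\delta_y\pmod{|x-y|}$ for all $0\le x,y\le m-1$. I will also record the full-window condition $m\mid\sum_i\delta_i$, since the pairwise congruences alone may lose information at the top scale. The hypothesis is invariant under reversing the sequence $i\mapsto m-1-i$, so I am free to normalize which end of the sequence I work from.

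\emph{Step 2: the first change.} Let $c=\delta_0$, and let $r\ge1$ be the first index with $\delta_r\ne c$, which exists because the sequence is nonconstant. Since $\delta_x=c$ for all $x<r$, Step 1 gives $\delta_r\equiv c$ modulo each of $1,2,\ldots,r$. Hence $|\delta_r-c|\ge\lcm(1,\ldots,r)$.

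More generally, every $j\ge r$ satisfies $\delta_j\equiv c\pmod{\lcm(j-r+1,\ldots,j)}$. So either $\delta_j=c$ or $|\delta_j-c|\ge (j-r+1)\binom{j}{r-1}$ by Lemma~\ref{lem:lcm}, applied with $a=j-r+1$ and $t=r-1$.

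For $j$ with $\delta_j=c$, I compare instead with $\delta_r$. This places $\delta_j$ in a second constant run and gives a symmetric lcm constraint.

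\emph{Step 3: summing and the main obstacle.} There are now two regimes.
\begin{itemize}
\item If $r$ is moderately large, then $\lcm(1,\ldots,r)\ge 2^{r-1}$ already makes a single term, or the forced value of $c$, exceed $m(m-1)/2$. Nonnegativity is used here: if $\delta_r<c$, then $c\ge\lcm(1,\ldots,r)$, and the $r$ copies of $c$ contribute at least $r\cdot\lcm(1,\ldots,r)$.
\item If $r$ is small, the sequence consists of runs of constant value, and each value change across a gap of length $g$ carries a jump divisible by the lcm of the intervening distances.
\end{itemize}
The key estimate I would aim for is $\delta_j+\delta_{m-1-j}\ge m-1$ for every $j$, or at least its averaged form. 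Summing over $j$ would then give $\sum\delta_i\ge m(m-1)/2$.

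To prove this estimate, I would use that for the pair $(j,m-1-j)$, and more generally for indices straddling a value change, the difference $\delta_x-\delta_y$ is nonzero and divisible by a number that is at least the distance spanned. Nonnegativity then converts these differences into lower bounds on the values themselves.

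I expect this bookkeeping to be the main obstacle: making the lower bound hold uniformly over all indices rather than only near the change point. I would first attack it by choosing the change point with the largest adjacent gap, using the reversal symmetry, and invoking Lemma~\ref{lem:lcm} to dominate the contributions of indices far from the change. If the pairwise estimate falls short for small $m$, I would close the gap using the full-window divisibility $m\mid\sum\delta_i$ together with a direct check.
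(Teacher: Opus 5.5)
There is a genuine gap. Your large-$r$ bullet is fine, but it only covers $r$ with $2^{r-1}\ge m(m-1)/2$. The small-$r$ case is the whole content of the lemma, and the estimate you propose for it is false even under the full hypotheses. The sequence $(0,\ldots,0,\lcm(1,\ldots,m))$ satisfies every window condition, since each window sums to $0$ or to $\lcm(1,\ldots,m)$. Yet $\delta_j+\delta_{m-1-j}=0$ for $1\le j\le m-2$; already $(0,0,6)$ with $m=3$ breaks it. The \emph{averaged form} of that estimate is just the conclusion, so the plan has no mechanism for the critical case.

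Step 1 also discards hypotheses the lemma needs. The pairwise congruences $\delta_x\equiv\delta_y\pmod{|x-y|}$ only say that all $q$-windows have the same sum modulo $q$, not that this common sum is $0$. Adding $m\mid\sum_i\delta_i$ does not repair this. For example, $(0,4,0,0)$ with $m=4$ and $(0,0,6,0,0,0)$ with $m=6$ are nonnegative and nonconstant. Both satisfy all pairwise congruences and $m\mid\sum_i\delta_i$, yet their sums are $4<6$ and $6<15$. They violate $3\mid\delta_0+\delta_1+\delta_2$ and $4\mid\delta_0+\cdots+\delta_3$ respectively. So no argument or \emph{direct check} using only the data you keep can close the proof; the intermediate window conditions must be used.

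The paper uses them through prefix sums $P_j=\sum_{i<j}\delta_i$, for which the hypothesis is exactly $j-i\mid P_j-P_i$ for all $0\le i<j\le m$. It then compares $P$ with its chord: write $P_m=am$ and $E_j=P_j-aj$. Then $E_0=E_m=0$, the $E_j$ satisfy the same divisibility, and nonnegativity becomes the one-sided bound $E_{i+1}-E_i\ge-a$.

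Assuming $a<(m-1)/2$, an induction from both ends inward shows $E_r=E_{m-r}=0$. With $h=m-2r$, both $E_r$ and $E_{m-r}$ are multiples of $\Lambda=\lcm(1,\ldots,r,h+1,\ldots,h+r)>a$, which forces $E_r\ge0\ge E_{m-r}$. A positive gap $E_r-E_{m-r}$ would then be a multiple of $\lcm(h,\Lambda)\ge h\binom{h+r}{r}$, by Lemma~\ref{lem:lcm}, while also being at most $ah$. When $m$ is even, a one-line check handles the middle value. Hence all $E_j=0$, so every $\delta_i=a$, contradicting nonconstancy.

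The idea your first-change analysis lacks is this global, two-ended comparison across the middle gap, with Lemma~\ref{lem:lcm} applied to cumulative deviations rather than to individual $\delta_i$.
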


\begin{proof}
Subtract $\min_i\delta_i$ from every $\delta_i$.
This preserves the divisibility hypotheses, preserves nonconstancy, and can only decrease the sum.
Thus it suffices to prove the result under the additional assumption
\[
        \min_i\delta_i=0.
\]

Define prefix sums
\[
        P_0=0,
        \qquad
        P_j=\sum_{i=0}^{j-1}\delta_i
        \qquad (1\le j\le m).
\]
The hypothesis says that
\[
        j-i\mid P_j-P_i
        \qquad
        (0\le i<j\le m).
\]
In particular, $m\mid P_m$.
Write
\[
        P_m=am.
\]
We prove that if $(\delta_i)$ is nonconstant, then $a\ge (m-1)/2$.

Assume, for contradiction, that
\[
        a<\frac{m-1}{2}.
\]
Set
\[
        E_j=P_j-aj.
\]
Then
\[
        E_0=E_m=0
\]
and
\[
        j-i\mid E_j-E_i
        \qquad
        (0\le i<j\le m).
\]
Also
\[
        \delta_i=P_{i+1}-P_i=a+E_{i+1}-E_i\ge0.
\]

We prove by induction from the two endpoints inward that all $E_j$ vanish.
Suppose that for some integer $r\ge1$ with $2r<m$, we already know
\[
        E_0=\cdots=E_{r-1}=0
\]
and
\[
        E_{m-r+1}=\cdots=E_m=0.
\]
Put
\[
        h=m-2r>0.
\]
From divisibility by distances to the already-vanishing endpoint values, both $E_r$ and $E_{m-r}$ are divisible by
\[
        \Lambda=
        \lcm(1,2,\ldots,r,h+1,h+2,\ldots,h+r).
\]
Since
\[
        \Lambda\ge h+r=m-r>a,
\]
the inequality
\[
        \delta_{r-1}=a+E_r-E_{r-1}=a+E_r\ge0
\]
forces $E_r\ge0$.
Indeed, a negative multiple of $\Lambda$ would be at most $-\Lambda<-a$.
Similarly,
\[
        \delta_{m-r}=a+E_{m-r+1}-E_{m-r}=a-E_{m-r}\ge0
\]
forces $E_{m-r}\le0$.

If either $E_r$ or $E_{m-r}$ is nonzero, then
\[
        D:=E_r-E_{m-r}>0.
\]
The integer $D$ is divisible by $h=(m-r)-r$, and it is also divisible by $\Lambda$.
Therefore $D$ is divisible by $\lcm(h,\Lambda)$.
On the other hand,
\[
\begin{aligned}
        D
        &=
        \sum_{i=r}^{m-r-1}(E_i-E_{i+1}) \\
        &=
        \sum_{i=r}^{m-r-1}(a-\delta_i)
        \le ah.
\end{aligned}
\]
Thus
\[
        a\ge \frac{\lcm(h,\Lambda)}{h}.
\]
But $\lcm(h,\Lambda)$ is a multiple of $\lcm(h,h+1,\ldots,h+r)$.
By Lemma~\ref{lem:lcm},
\[
        \frac{\lcm(h,\Lambda)}{h}
        \ge
        \binom{h+r}{r}
        \ge
        h+r
        =m-r
        >\frac{m-1}{2},
\]
contradicting $a<(m-1)/2$.
Therefore
\[
        E_r=E_{m-r}=0.
\]
This completes the inward induction away from the possible middle point.

If $m$ is even, one middle value $E_{m/2}$ remains.
It is divisible by $\lcm(1,2,\ldots,m/2)$.
The inequalities on the two adjacent increments give
\[
        -a\le E_{m/2}\le a.
\]
Since $\lcm(1,2,\ldots,m/2)\ge m/2>a$, this forces $E_{m/2}=0$.
Thus $E_j=0$ for all $j$.

It follows that
\[
        \delta_i=P_{i+1}-P_i=a
\]
for all $i$.
Since $\min_i\delta_i=0$, we get $a=0$ and hence $\delta_i=0$ for all $i$, contradicting nonconstancy.
Therefore $a\ge(m-1)/2$, and
\[
        \sum_{i=0}^{m-1}\delta_i=P_m=am\ge \frac{m(m-1)}2.
\]
\end{proof}

\section{Singleton Runs}

Let $N=n^2$, and let $k\le n$.
Put
\[
        M=\floor{\frac{N}{k}}.
\]
Partition the initial interval $[0,Mk-1]$ into the $M$ consecutive blocks
\[
        I_j=[jk,(j+1)k-1],
        \qquad
        0\le j<M.
\]
Each $I_j$ is a $k$-term arithmetic progression with common difference $1$.

A block $I_j$ is called singleton if $|A\cap I_j|=1$.
A singleton run is a consecutive string of singleton blocks.

\begin{lemma}[Long singleton run]\label{lem:run}
Let $A\subseteq[N]_0$ meet every $k$-term arithmetic progression in $[N]_0$, and suppose
\[
        |A|=M+s.
\]
Then there is a singleton run of length
\[
        L\ge \frac{M-s}{s+1}.
\]
Writing the unique selected points in this run as
\[
        x_i=(u+i)k+r_i,
        \qquad
        0\le r_i<k,
        \qquad
        0\le i<L,
\]
and putting
\[
        m=L-1,
        \qquad
        \delta_i=k-(x_{i+1}-x_i)=r_i-r_{i+1}
        \quad (0\le i<m),
\]
one has
\[
        \delta_i\ge0,
        \qquad
        \sum_{i=0}^{m-1}\delta_i\le k-1,
\]
and
\[
        q\mid \delta_i+\delta_{i+1}+\cdots+\delta_{i+q-1}
\]
for every $1\le q\le m$ and every $0\le i\le m-q$.
\end{lemma}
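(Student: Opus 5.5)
The plan has three parts: a pigeonhole argument for the long run, a direct construction of a difference-$1$ progression for the sign and size of the $\delta_i$, and a residue-class argument modulo $q$ for the divisibility.

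\emph{The run.} Since every block $I_j$ is a $k$-term progression, $A$ meets each of the $M$ blocks. Write $|A|=\sum_j|A\cap I_j|+|A\setminus[0,Mk-1]|$. Each nonsingleton block contributes at least $2$, so there are at most $s$ of them. Deleting at most $s$ blocks from a row of $M$ consecutive blocks leaves at most $s+1$ maximal runs of singleton blocks, with at least $M-s$ blocks in total. Hence some run has length $L\ge (M-s)/(s+1)$. Fix such a run $I_u,\dots,I_{u+L-1}$ and its points $x_i=(u+i)k+r_i$.

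\emph{Nonnegativity and the sum bound.} Suppose $\delta_i<0$ for some $i$, i.e.\ $r_{i+1}>r_i$. The integers strictly between $x_i$ and $x_{i+1}$ lie in $I_{u+i}\cup I_{u+i+1}$, and $A$ has no points there. There are $x_{i+1}-x_i-1=k+(r_{i+1}-r_i)-1\ge k$ of them. So they contain a $k$-term progression of difference $1$ missing $A$, which is a contradiction. The sum telescopes: $\sum_{i<m}\delta_i=r_0-r_m\le k-1$.

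\emph{Divisibility.} This is the main step. Fix $1\le q\le m$ and $0\le i\le m-q$, and let $W_i=I_{u+i}\cup\cdots\cup I_{u+i+q-1}$, an interval of exactly $qk$ consecutive integers inside $[N]_0$. Each residue class modulo $q$ meets $W_i$ in exactly $k$ integers, and these form a $k$-term progression of common difference $q$. Each of these $q$ progressions must contain a point of $A$. But $A\cap W_i=\{x_i,\dots,x_{i+q-1}\}$ has only $q$ points. Hence these points lie in pairwise distinct residue classes modulo $q$, one in each class.

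The same holds for the window $W_{i+1}$, which is legitimate because $i+q\le m=L-1$ keeps it inside the run. Thus $x_{i+1},\dots,x_{i+q}$ also represent all residues modulo $q$. Comparing the two complete residue systems, which share the $q-1$ points $x_{i+1},\dots,x_{i+q-1}$, gives $x_{i+q}\equiv x_i\pmod q$. Since
\[
x_{i+q}-x_i=qk-(\delta_i+\cdots+\delta_{i+q-1}),
\]
this yields $q\mid\delta_i+\cdots+\delta_{i+q-1}$.

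The only care needed is the bookkeeping that the windows stay within the singleton run, so that $A\cap W_i$ is exactly the listed points.
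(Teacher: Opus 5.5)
Your proof is correct and follows the paper's argument step for step: a pigeonhole count over the at most $s$ nonsingleton blocks, an empty difference-$1$ progression of length $k$ between consecutive run points to force $\delta_i\ge0$, and a comparison of two adjacent complete residue systems modulo $q$ to get $x_{i+q}\equiv x_i\pmod q$. Your extra bookkeeping spells out what the paper leaves implicit, namely that $A\cap W_i$ is exactly the $q$ listed points and that $i+q\le m$ keeps $W_{i+1}$ inside the run.
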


\begin{proof}
Every block $I_j$ is a $k$-term progression, so every block contains at least one point of $A$.
Since $|A|=M+s$, at most $s$ blocks are nonsingleton.
Thus at least $M-s$ blocks are singleton, divided into at most $s+1$ runs.
One run has length at least $(M-s)/(s+1)$.

Now consider such a run.
If $x_{i+1}-x_i>k$, then the interval
\[
        [x_i+1,x_i+k]
\]
is a $k$-term arithmetic progression of common difference $1$ and contains no point of $A$.
This is impossible.
Therefore $x_{i+1}-x_i\le k$, so $\delta_i\ge0$.
The equality $\delta_i=r_i-r_{i+1}$ shows that the $r_i$ are nonincreasing, and hence
\[
        \sum_{i=0}^{m-1}\delta_i=r_0-r_m\le k-1.
\]

It remains to prove the divisibility condition.
Fix $q$ and $i$ with $1\le q\le m$ and $0\le i\le m-q$.
The union
\[
        I_{u+i}\cup I_{u+i+1}\cup\cdots\cup I_{u+i+q-1}
\]
is an interval of length $qk$.
For each residue class modulo $q$, the elements of this interval in that residue class form a $k$-term arithmetic progression with common difference $q$.
Since $A$ meets every such progression, the $q$ singleton points
\[
        x_i,x_{i+1},\ldots,x_{i+q-1}
\]
must occupy all residue classes modulo $q$.
Thus every $q$-window of selected singleton-run points is a complete residue system modulo $q$.
Comparing adjacent $q$-windows gives
\[
        x_{i+q}\equiv x_i\pmod q.
\]
But
\[
        x_{i+q}-x_i
        =qk-
        \sum_{j=i}^{i+q-1}\delta_j.
\]
Therefore
\[
        q\mid \sum_{j=i}^{i+q-1}\delta_j.
\]
\end{proof}

\section{Lower Bounds}

We first prove the critical result.

\begin{proof}[Proof of Theorem~\ref{thm:critical}]
Let $A\subseteq[n^2]_0$ meet every $n$-term arithmetic progression, and write
\[
        |A|=n+s.
\]
It suffices to prove
\[
        s\ge \left(\frac1{\sqrt2}+o(1)\right)n^{1/2}.
\]
If $s$ is not $O(n^{1/2})$ this is immediate, so assume $s=O(n^{1/2})$.

Apply Lemma~\ref{lem:run} with $k=n$ and $M=n$.
We obtain a singleton run of length $L=m+1$ with
\begin{equation}\label{eq:critical-run-length}
        m\ge \frac{n-s}{s+1}-1
        =
        \frac{n}{s+1}-O(1).
\end{equation}
Let $\delta_0,\ldots,\delta_{m-1}$ be the associated descent sequence.

First suppose this sequence is nonconstant.
By Lemmas~\ref{lem:sequence} and~\ref{lem:run},
\begin{equation}\label{eq:critical-nonconstant-descent}
        \frac{m(m-1)}2
        \le
        \sum_{i=0}^{m-1}\delta_i
        \le n-1.
\end{equation}
Thus
\begin{equation}\label{eq:critical-nonconstant-upper}
        m\le (\sqrt2+o(1))n^{1/2}.
\end{equation}
Combining \eqref{eq:critical-run-length} and \eqref{eq:critical-nonconstant-upper} gives
\[
        s\ge \left(\frac1{\sqrt2}+o(1)\right)n^{1/2}.
\]

It remains to handle the case where the descent sequence is constant.
Write
\[
        \delta_i=\delta
        \qquad
        (0\le i<m).
\]
Then
\[
        x_{i+1}-x_i=n-\delta=:d.
\]
All $m+1$ selected points in the singleton run are congruent modulo $d$.
Also
\begin{equation}\label{eq:critical-delta-bound}
        m\delta=\sum_i\delta_i\le n-1,
        \qquad
        \delta\le \frac{n-1}{m}.
\end{equation}

For every residue class $a$ modulo $d$, the progression
\[
        a,
        a+d,
        \ldots,
        a+(n-1)d
\]
lies in $[n^2]_0$, since $d\le n$.
Thus $A$ must meet every residue class modulo $d$.
The singleton run occupies only one residue class modulo $d$, so the points outside the run must cover at least $d-1$ other residue classes.
There are $n+s-(m+1)$ points outside the run, whence
\[
        n+s-(m+1)
        \ge
        d-1
        =n-\delta-1.
\]
Equivalently, using \eqref{eq:critical-delta-bound},
\begin{equation}\label{eq:critical-constant-bound}
        s\ge m-\delta
        \ge
        m-\frac{n-1}{m}.
\end{equation}
Combining \eqref{eq:critical-run-length} and \eqref{eq:critical-constant-bound} gives the desired bound.
Indeed, along any convergent subsequence write
\[
        s=c n^{1/2}+o(n^{1/2}).
\]
If $c=0$, then \eqref{eq:critical-run-length} gives $m/n^{1/2}\to\infty$, and \eqref{eq:critical-constant-bound} gives $s/n^{1/2}\to\infty$, a contradiction.
Thus $c>0$.
Now \eqref{eq:critical-run-length} gives
\[
        m\ge (1/c+o(1))n^{1/2}.
\]
If $c<1/\sqrt2$, then $m>\sqrt n$ for large $n$.
Since the function $t\mapsto t-(n-1)/t$ is increasing for $t>\sqrt{n-1}$, \eqref{eq:critical-constant-bound} gives
\[
        c\ge \frac1c-c,
\]
contradicting $c<1/\sqrt2$.
Thus every subsequential limit satisfies $c\ge1/\sqrt2$, and the theorem follows.
\end{proof}

We now prove the below-critical extension.

\begin{proof}[Proof of Theorem~\ref{thm:below}]
Let
\[
        N=n^2,
        \qquad
        M=\floor{\frac{N}{k}},
\]
and write
\[
        |A|=M+s.
\]
Since
\[
        k=n-\theta n^{1/2}+O(1),
\]
we have
\[
        M=n+\theta n^{1/2}+O(1),
\]
and hence
\begin{equation}\label{eq:below-delta-M}
        \Delta:=M-k=2\theta n^{1/2}+O(1).
\end{equation}

By Lemma~\ref{lem:run}, there is a singleton run of length $L=m+1$ with
\begin{equation}\label{eq:below-run-length}
        m\ge \frac{M-s}{s+1}-1
        =
        \frac{M}{s+1}-O(1).
\end{equation}
If $s$ is not $O(n^{1/2})$, the desired lower bound is immediate, so assume $s=O(n^{1/2})$.

If the associated descent sequence is nonconstant, then Lemmas~\ref{lem:sequence} and~\ref{lem:run} give
\begin{equation}\label{eq:below-nonconstant-descent}
        \frac{m(m-1)}2\le k-1,
\end{equation}
so
\begin{equation}\label{eq:below-nonconstant-upper}
        m\le (\sqrt2+o(1))n^{1/2}.
\end{equation}
Therefore, by \eqref{eq:below-run-length},
\[
        s\ge \left(\frac1{\sqrt2}+o(1)\right)n^{1/2}.
\]
Since
\[
        \frac{\sqrt{\theta^2+2}-\theta}{2}\le \frac1{\sqrt2},
\]
this is sufficient.

It remains to handle the constant-descent case.
Suppose
\[
        \delta_i=\delta
        \qquad
        (0\le i<m).
\]
Then
\[
        x_{i+1}-x_i=k-\delta=:d.
\]
Since $k\le n$, we have $d\le k\le n$.
For every residue class $a$ modulo $d$, the progression
\[
        a,
        a+d,
        \ldots,
        a+(k-1)d
\]
lies in $[n^2]_0$, because its largest possible last term is $kd-1\le k^2-1\le n^2-1$.
Thus $A$ must meet every residue class modulo $d$.
The singleton run occupies one residue class modulo $d$, so
\[
        M+s-(m+1)
        \ge
        d-1
        =k-\delta-1.
\]
Equivalently,
\[
        s\ge m+k-M-\delta
        =m-\Delta-\delta.
\]
Since $m\delta\le k-1$, we get
\begin{equation}\label{eq:below-constant-bound}
        s\ge m-\Delta-\frac{k-1}{m}.
\end{equation}

Let
\begin{equation}\label{eq:below-ctheta}
        c_\theta=\frac{\sqrt{\theta^2+2}-\theta}{2}.
\end{equation}
Combining \eqref{eq:below-run-length} and \eqref{eq:below-constant-bound} gives the claimed constant.
Indeed, along any convergent subsequence write
\[
        s=c n^{1/2}+o(n^{1/2}).
\]
If $c=0$, then \eqref{eq:below-run-length} gives $m/n^{1/2}\to\infty$, and \eqref{eq:below-constant-bound} gives $s/n^{1/2}\to\infty$, a contradiction.
Thus $c>0$.
Now \eqref{eq:below-run-length} gives
\[
        m\ge \left(\frac1c+o(1)\right)n^{1/2}.
\]
If $c<c_\theta$, then $c<1/\sqrt2$, so $m>\sqrt n$ for large $n$.
Since $t\mapsto t-(k-1)/t$ is increasing for $t>\sqrt{k-1}$, \eqref{eq:below-constant-bound} and \eqref{eq:below-delta-M} imply
\[
        c
        \ge
        \frac1c-2\theta-c.
\]
Equivalently,
\[
        2c^2+2\theta c-1\ge0.
\]
The positive root of this quadratic is $c_\theta$, contradicting $c<c_\theta$.
Thus every subsequential limit is at least $c_\theta$, and the theorem follows.
\end{proof}

\section{Randomized Front Construction}

We now prove the prime-square upper bound.
Throughout this section $p$ is prime, and every integer $x\in[p^2]_0$ is written uniquely as
\[
        x=ip+r,
        \qquad
        0\le i\le p-1,
        \qquad
        0\le r\le p-1.
\]
We call $i$ the row and $r$ the column.

\begin{proposition}[Random front construction with alteration]\label{prop:random-front}
Let $p$ be prime.
Let $h,H$ be integers satisfying
\[
        2\le h<H<p,
        \qquad
        p>(h+1)H.
\]
Then
\[
        f(p^2,p)
        \le
        2p-h-1+
        \ceil{p h^2 e^{-(H-1)/h}}.
\]
\end{proposition}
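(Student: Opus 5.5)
The plan is to give an explicit set $A_0\subseteq[p^2]_0$ of size $2p-h-1$, with one randomized ingredient, and then repair it. Let the column-zero points be $\{ip: h\le i\le p-1\}$, which are $p-h$ points, and let the \emph{front} be the points $(h-1)p+r$ for $H\le r\le p-1$, which are $p-H$ points. For each $r$ with $1\le r\le H-1$ choose a row $\rho(r)\in\{0,\ldots,h-1\}$ independently and uniformly at random, and add $\rho(r)p+r$; these are $H-1$ points. Then $|A_0|=(p-h)+(p-H)+(H-1)=2p-h-1$. I will show that every $p$-term progression missed by $A_0$ lies in a family $\mathcal F$ with $|\mathcal F|\le ph^2$, and that each member of $\mathcal F$ is missed with probability at most $e^{-(H-1)/h}$. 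Fixing an outcome with at most the expected number of missed progressions and adding one point from each missed progression then gives the bound of Proposition~\ref{prop:random-front}.

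\textbf{Deterministic reduction.} Let $P=\{x_0+td:0\le t\le p-1\}\subseteq[p^2]_0$.

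If $p\mid d$, then $(p-1)d\le p^2-1$ forces $d=p$, so $P$ is a full column. Every column contains a point of $A_0$: column $0$ from row $h$ on, columns $r\ge H$ at row $h-1$, and columns $1\le r<H$ at row $\rho(r)$.

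If $p\nmid d$, the $p$ terms are pairwise distinct modulo $p$. So $P$ meets each column exactly once, and in particular contains exactly one multiple $ip$ of $p$. If $i\ge h$, then $P$ is hit. So assume $i\le h-1$.

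If $d\ge h+1$, the span $(p-1)d\ge(p-1)(h+1)\ge hp$ forces $P$ to have a term at least $hp$, while $ip<(h-1)p+H$. The set $A_0$ contains the whole interval $J=[(h-1)p+H,\,hp]$, of length $p-H+1$. Hence if $d\le p-H+1$, the progression $P$ cannot jump over $J$ and is hit.

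The remaining range $p-H+1<d$ with $p\nmid d$ is where I expect the main work, and where the hypothesis $p>(h+1)H$ should enter. Write $d=ap+b$ with $0<b<p$. Consecutive terms move by $a$ rows plus a column shift of $b$, with a carry. I would track the terms lying in rows $h-1$ and $h,h+1,\ldots$ near the column-zero term, and show that one of them lands either at column $0$ in a row $\ge h$ or at a column $\ge H$ in row $h-1$. The idea is that the columns visited by an arithmetic sequence with step $b$ (or $p-b$) cannot avoid the long window $[H,p-1]$ in row $h-1$ while staying consistent with $ip$ lying in a low row. The condition $p>(h+1)H$ is what guarantees that the window is wide enough. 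This case analysis is the step I would work out most carefully.

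\textbf{Probabilistic step.} After the reduction, a missed progression has $1\le d\le h$ and contains some $ip$ with $i<h$. Since its span $(p-1)d$ must stay below the first term $\ge hp$, which would be hit, the progression lies within the first $h$ rows. So $\mathcal F$ consists of progressions with $d\le h$ and first term $<hp$, and $|\mathcal F|\le ph^2$.

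Each $P\in\mathcal F$ meets each column $r\in\{1,\ldots,H-1\}$ in exactly one term, lying in some row $i_r(P)<h$. Then $P$ is hit by the random part whenever $\rho(r)=i_r(P)$ for some such $r$. These $H-1$ events are independent, each of probability $1/h$, so
\[
\Pr[P\text{ missed}]\le(1-1/h)^{H-1}\le e^{-(H-1)/h}.
\]
The expected number of missed progressions is therefore at most $ph^2e^{-(H-1)/h}$. Choosing $\rho$ to achieve at most this number, which is an integer, so at most its ceiling, and adding one point from each missed progression completes the proof.
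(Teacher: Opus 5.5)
\textbf{Verdict: genuine gap.} Your reduction fails at the single progression with $d=p+1$, and the residual case you left as a plan does not handle it.

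\textbf{The $d=p+1$ diagonal.} Your residual range ``$p-H+1<d$, $p\nmid d$'' contains this case. Since $(p-1)d\le p^2-1$ forces $d\le p+1$, that range is just $p-H+2\le d\le p-1$ together with $d=p+1$. The unique progression with $d=p+1$ is the diagonal $\{jp+j:0\le j\le p-1\}$.
\begin{itemize}
\item Its multiple of $p$ is $0$, in row $0<h$, and it has no other term in column $0$.
\item Its only term in row $h-1$ is $(h-1)p+(h-1)$, in column $h-1<H$.
\end{itemize}
So it is hit neither by your column-zero tail nor by your front. The outcome you plan to prove for this range is false, and so is your claim that every missed progression has $d\le h$.

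Its only chance is $\rho(r)=r$ for some $1\le r\le h-1$. This fails with probability $(1-1/h)^{h-1}>e^{-1}\ge e^{-(H-1)/h}$, so it cannot be absorbed into $\mathcal F$ at the per-progression cost you use.

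The paper's fix is exactly what your set lacks. Put the random part on columns $1,\dots,H$ and the front on columns $H+1,\dots,p-1$, and set $\rho(1)=1$ deterministically. Then $p+1$ is always selected, $H-1$ columns remain genuinely random, and the size is still $2p-h-1$.

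Alternatively, you can keep your set and use the slack in the ceiling. With $\mu=ph^2e^{-(H-1)/h}$, the expected number of missed progressions is at most $\mu+(1-1/h)^{h-1}<\mu+1$, so some $\rho$ misses at most $\lceil\mu\rceil$. You would have to say this explicitly, because your last step only uses $X_\rho\le\mu$, which you can no longer guarantee.

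\textbf{The rest of the residual range is easy.} For $p-H+2\le d\le p-1$ you need no carry analysis. Write $d=p-\delta$ with $1\le\delta\le H-2$. From the multiple $ip$ with $i\le h-1$, the term
$ip+(h-i)d=(h-1)p+\bigl(p-(h-i)\delta\bigr)$
lies in row $h-1$. Its column is at least $p-h(H-2)>H$, using $p>(h+1)H$. It belongs to $P$ because $(i+1)\delta<p$ forces the index of $ip$ in $P$ to be at most $i$.

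\textbf{Your interval argument is a real simplification.} Your observation that a step $d\le p-H+1$ cannot jump over $J=[(h-1)p+H,hp]$ handles all $d\le p-H+1$ at once. The paper instead treats $H<d<p$ by explicitly finding a step $j\in\{j_0,j_0+1\}$ landing in row $h-1$ with residue $>H$, and treats $d\le H$ separately via the predecessor of the first term $\ge hp$. Once the $d=p+1$ progression is repaired, your reduction plus your probabilistic step gives a complete proof.
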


\begin{proof}
Choose a random map
\[
        \rho:\{1,2,\ldots,H\}\to\{0,1,\ldots,h-1\}
\]
as follows.
Set
\[
        \rho(1)=1,
\]
and for each $2\le r\le H$, choose $\rho(r)$ independently and uniformly from $\{0,1,\ldots,h-1\}$.

Define
\[
\begin{aligned}
        B_\rho
        ={}&
        \{hp,(h+1)p,\ldots,(p-1)p\} \\
        &\cup
        \{(h-1)p+r:H<r\le p-1\} \\
        &\cup
        \{\rho(r)p+r:1\le r\le H\}.
\end{aligned}
\]
Then
\begin{equation}\label{eq:front-size}
        |B_\rho|=(p-h)+(p-1-H)+H=2p-h-1.
\end{equation}
We first identify the only progressions which can be missed by $B_\rho$.

Let
\[
        P=\{a,a+d,\ldots,a+(p-1)d\}\subseteq[p^2]_0.
\]
Then
\[
        1\le d\le p+1.
\]

If $d=p$, then $P$ is a column.
Column $0$ is hit by the tail $hp,(h+1)p,\ldots,(p-1)p$, and every nonzero column is hit by exactly one front point.
Thus every $d=p$ progression is hit.

If $d=p+1$, there is only one such progression:
\[
        0,
        p+1,
        2(p+1),
        \ldots,
        (p-1)(p+1)=p^2-1.
\]
Since $\rho(1)=1$, the point $p+1$ lies in $B_\rho$.
Thus the $d=p+1$ progression is hit.

Now suppose $H<d<p$.
Since $p$ is prime, $P$ contains exactly one multiple of $p$.
Write this multiple as $qp$.
If $q\ge h$, then $qp\in B_\rho$.
Assume $q<h$.
Set
\[
        a_0=h-1-q\ge0.
\]
We claim that $P$ contains a point of the form
\[
        (h-1)p+r
\]
with $r>H$.
Such a point lies in $B_\rho$.

Starting from the multiple $qp$, we look for a forward step $j\ge1$ such that
\[
        qp+jd=(h-1)p+r
\]
with $r>H$.
Since $(h-1)p-qp=a_0p$, this is equivalent to finding $j$ with
\[
        H<jd-a_0p<p.
\]
Let
\[
        j_0=\floor{\frac{a_0p}{d}}+1,
        \qquad
        r_0=j_0d-a_0p.
\]
Then $1\le r_0\le d$.
If $r_0>H$, take $j=j_0$.
If $r_0\le H$ and $r_0+d<p$, take $j=j_0+1$, since then $r_0+d>H$ and still $r_0+d<p$.
The only remaining possibility is
\[
        r_0\le H
        \qquad\text{and}\qquad
        r_0+d\ge p.
\]
Then $d=p-\delta$ for some $1\le\delta\le H$.
Since $a_0\le h-1$ and $p>(h+1)H$, we have
\[
        a_0\delta<p-\delta.
\]
Therefore
\[
        \floor{\frac{a_0p}{p-\delta}}=a_0,
\]
so $j_0=a_0+1$.
Hence
\[
        r_0=(a_0+1)(p-\delta)-a_0p
        =p-(a_0+1)\delta
        \ge p-hH>H,
\]
contradicting $r_0\le H$.
Thus a suitable $j\in\{j_0,j_0+1\}$ always exists.

It remains to check that the corresponding forward term from $qp$ is actually in the $p$-term progression $P$.
Let $qp$ occur at position $\ell$ in $P$.
Since the progression starts nonnegatively,
\[
        \ell d\le qp.
\]
Also our chosen $j$ satisfies
\[
        j\le \frac{a_0p}{d}+2.
\]
Therefore
\[
        \ell+j
        \le
        \frac{(q+a_0)p}{d}+2
        =
        \frac{(h-1)p}{d}+2.
\]
Since $d>H>h$ are integers, $d\ge H+1\ge h+2$, and hence
\[
        \frac{(h-1)p}{d}+2
        \le
        \frac{(h-1)p}{h+2}+2
        <p,
\]
where the final inequality follows from $p>h+1$.
Thus $0\le\ell+j\le p-1$.
The term $qp+jd$ belongs to $P$, lies in row $h-1$, and has residue $>H$.
Hence every AP with $H<d<p$ is hit deterministically.

It remains to consider progressions with $1\le d\le H$ which have not already been hit deterministically.
Fix such a progression $P$.
If its unique multiple of $p$, say $qp$, has $q\ge h$, then $P$ is hit by the column-zero tail.
Thus we may assume $q<h$.
If $P$ contains a point in row $h-1$ with residue $>H$, then $P$ is hit by the deterministic large-residue part of $B_\rho$.
Thus we may also assume that no such point occurs.

Under these assumptions every term of $P$ lies below $hp$.
Indeed, if some term of $P$ were at least $hp$, then since $P$ also contains $qp<hp$, let $x_j$ be the first term of $P$ which is at least $hp$.
Then its predecessor satisfies
\[
        hp-H\le hp-d\le x_{j-1}<hp.
\]
Since $p>(h+1)H\ge3H$, this predecessor lies in row $h-1$ with residue at least $p-H>H$, contradicting the assumption.
Thus all terms of $P$ lie below $hp$.

This already forces $d\le h$.
Indeed, if $d\ge h+1$, then
\[
        (p-1)d\ge (p-1)(h+1)>hp-1,
\]
because $p>h+1$.
But a $p$-term progression lying below $hp$ has span at most $hp-1$.
Thus every progression which reaches the random part of the argument has $1\le d\le h$ and lies in $[0,hp-1]$.
Let $\mathcal R$ be the family of such remaining progressions.
Then
\begin{equation}\label{eq:remaining-ap-count}
        |\mathcal R|\le \sum_{d=1}^h hp\le p h^2.
\end{equation}

For $P\in\mathcal R$, because $d<p$ and $p$ is prime, the $p$ terms of $P$ contain exactly one element in each column modulo $p$.
For every $1\le r\le H$, let
\[
        y_rp+r
\]
be the unique term of $P$ with residue $r$ modulo $p$.
Since all terms of $P$ lie below $hp$, we have $0\le y_r\le h-1$.
The random small-residue part of $B_\rho$ hits $P$ if $\rho(r)=y_r$ for at least one $1\le r\le H$.
For $r=2,\ldots,H$, the events $\rho(r)=y_r$ are independent and each has probability $1/h$.
Consequently
\begin{equation}\label{eq:miss-probability}
        \Pr(P\cap B_\rho=\varnothing)
        \le
        \left(1-\frac1h\right)^{H-1}
        \le
        e^{-(H-1)/h}.
\end{equation}

Let $X_\rho$ be the number of progressions in $\mathcal R$ missed by $B_\rho$.
By \eqref{eq:remaining-ap-count} and \eqref{eq:miss-probability},
\[
        \mathbb E \left[X_\rho\right]
        \le
        p h^2 e^{-(H-1)/h}.
\]
Hence there is a choice of $\rho$ for which
\[
        X_\rho\le p h^2 e^{-(H-1)/h}.
\]
For this choice of $\rho$, add one arbitrary point from each progression in $\mathcal R$ missed by $B_\rho$.
All progressions already hit remain hit, and each previously missed progression is repaired.
The resulting hitting set has size at most
\[
        2p-h-1+
        \ceil{p h^2 e^{-(H-1)/h}},
\]
which proves the proposition.
\end{proof}

\begin{proof}[Proof of Theorem~\ref{thm:upper}]
Fix $\gamma<\sqrt{2/3}$.
Choose a real number $A$ with
\[
        \frac32<A<\frac1{\gamma^2}.
\]
For all sufficiently large primes $p$, set
\[
        h=\floor{\gamma\sqrt{\frac p{\log p}}}
\]
and
\[
        H=\ceil{Ah\log p+1}.
\]
Then $2\le h<H<p$ for all sufficiently large $p$.
Also
\[
        (h+1)H=(A\gamma^2+o(1))p<p,
\]
because $A\gamma^2<1$.
Thus Proposition~\ref{prop:random-front} applies.
Moreover,
\[
        \frac{H-1}{h}\ge A\log p,
\]
so
\[
        p h^2e^{-(H-1)/h}
        \le
        p h^2 p^{-A}
        =O\!\left(\frac{p^{2-A}}{\log p}\right)
        =o\!\left(\sqrt{\frac p{\log p}}\right),
\]
since $A>3/2$ and $h^2=O(p/\log p)$.
Therefore
\[
        f(p^2,p)
        \le
        2p-h-1+o\!\left(\sqrt{\frac p{\log p}}\right)
        \le
        2p-(\gamma+o(1))\sqrt{\frac p{\log p}}.
\]
Since $\gamma<\sqrt{2/3}$ was arbitrary, the equivalent $\sqrt{2/3}-o(1)$ formulation follows.
This proves the theorem.
\end{proof}

\section*{Acknowledgements}
The author acknowledges the use of GPT-5.5 in verifying calculations, checking the exposition, and preparing the initial \LaTeX{} draft of this preprint. AI assistance was also used in refining the upper-bound construction, improving an earlier saving exponent from \(1/3\) to \(1/2-o(1)\), and optimizing the leading constant in the randomized front argument. The mathematical ideas and direction of the paper are due to the author, who takes full responsibility for the correctness of the results.

\end{document}